\numberwithin{equation}{section}
\theoremstyle{plain}
\newtheorem{theorem}{Theorem}
\theoremstyle{plain}
\newtheorem{proposition}{Proposition}
\theoremstyle{definition}
\newtheorem{proof}{Proof}
\newtheorem{remark}{Remark}
\begin{document}
\title{Large gaps between sums of two squares}
\author{A.\,B.~Kalmynin}
\address{National Research University Higher School of Economics, Russian Federation, 6 Usacheva Str., Moscow, Russia, 119048}
\email{alkalb1995cd@mail.ru}

\author{S.\,V.~Konyagin}
\address{Steklov Mathematical Institute, 8 Gubkin Str., Moscow, Russia, 119991}
\email{konyagin@mi-ras.ru}

\date{}
\udk{}
\maketitle
\begin{fulltext}
\begin{abstract}
\begin{bf}{Abstract:}\end{bf}

Let $\mathcal S=\{s_1<s_2<s_3<\ldots\}$ be the sequence of all natural numbers which can be 
represented as a sum of two squares of integers. For $X\ge2$ we denote by $g(X)$ 
the largest gap between consecutive elements of $\mathcal S$ that do not exceed $X$.
We prove that for $X \to +\infty$ the lower bound
$$g(X)\geq \left(\frac{390}{449}-o(1)\right)\ln X$$
holds.

This estimate is twice the recent estimate by R. Dietmann and C. Elsholtz.
\end{abstract}
\footnotetext{The first author is partially supported by Laboratory of Mirror Symmetry NRU HSE, RF Government grant, ag. \textnumero 14.641.31.0001, the "Russian Young Mathematics" contest, the Simons foundation and the foundation for Advancement of Theoretical Physics and Mathematics "BASIS”.}
\section{Introduction}

Let $\mathcal S=\{s_1<s_2<s_3<\ldots\}$ be the sequence of all natural numbers which can be represented as a sum of two squares of integers. Various questions regarding behavior of the quantity $g(X)=\max\limits_{n:s_{n+1}\leq X}(s_{n+1}-s_n)$, i.e. the largest gap between consecutive elements of $\mathcal S$ that do not exceed $X$, for $X \to +\infty$ were studied by many authors in different contexts. The best lower bounds for $g(X)$ were obtained by P.\,Tur\'an, P.\,Erd\H{o}s \cite{Erd}, I.\,Richards \cite{Rich} and by R.\,Dietmann and C.\,Elsholtz \cite{DiEl}. Proof of the best known upper bound for the function $g(X)$ is contained in paper \cite{BaCh}.

The goal of this article is to obtain a new lower bound for the quantity $g(X)$. Using recent result by R.\,Dietmann and C.\,Elsholtz \cite{DiEl}, we will construct large intervals that contain no elements of $\mathcal S$ and will prove the following statement

\begin{theorem}
For $X \to +\infty$ the lower bound

\begin{equation}
\label{desired}
g(X)\geq \left(\frac{390}{449}-o(1)\right)\ln X=(0.86859\ldots-o(1))\ln X.
\end{equation}

holds.
\end{theorem}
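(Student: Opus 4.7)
My plan is to construct an interval $[N+1, N+L]$ with $N+L \leq X$ and $L \geq (390/449 - o(1))\ln X$ containing no integer expressible as a sum of two squares. Recall that a positive integer $n$ is not a sum of two squares if and only if some prime $p \equiv 3 \pmod{4}$ appears in its factorization to odd multiplicity, so the goal is to impose, for every $k \in \{1,\ldots,L\}$, the existence of such a prime dividing $N+k$ to odd power. Following the classical Erd\H{o}s--Richards strategy and its refinement by Dietmann and Elsholtz, I would achieve this by choosing $N$ via the Chinese Remainder Theorem to satisfy a carefully designed system of congruences modulo prime powers $q_j^{e_j}$ with $q_j \equiv 3 \pmod{4}$, with the total modulus at most~$X$.

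The outline of the construction is as follows. First I would fix a finite set of primes $q_1 < q_2 < \ldots < q_m$ all congruent to $3 \pmod{4}$, assign each $k \in \{1,\ldots,L\}$ to a prime $q_{j(k)}$ from this set, and select residues $r_j$ and exponents $e_j$ so that the congruence $N \equiv r_j \pmod{q_j^{e_j}}$ guarantees $v_{q_j}(N+k)$ to be odd whenever $j(k) = j$. The Chinese Remainder Theorem then forces $N$ to lie in an arithmetic progression modulo $M = \prod_j q_j^{e_j}$, and the requirement $M \leq X$ dictates the trade-off between the number of primes used and the length of the covered interval. The bound of Dietmann--Elsholtz is obtained from the optimal balance in their covering scheme under the pinning choice $e_j = 2$; to double the constant I must inject an additional source of efficiency that roughly halves the per-prime contribution to $\log M$.

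The most natural mechanism is to relax the pinning $v_{q_j}(N+k) = 1$---which requires $e_j = 2$---to the weaker $v_{q_j}(N+k) \geq 1$, needing only $e_j = 1$, and to clean up by a secondary covering the sub-progression in which $v_{q_j}(N+k)$ happens to be even. The exceptional set has relative density $1/q_j$ inside the primary class of $k$'s assigned to $q_j$, and should be coverable with auxiliary primes whose contribution to $M$ is of lower order in $\log X$. A complementary ingredient is that integers $n \equiv 3 \pmod{4}$ are automatically non-sums, so one may thin the set of $k$'s for which a covering congruence is actually required. Combining these two savings should halve the effective exponent in the modulus and reproduce the constant $390/449 = 2 \cdot (195/449)$.

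The main obstacle I anticipate is the combinatorial bookkeeping for the secondary covering: once $e_j$ is reduced from $2$ to $1$, the residual ``bad'' sub-progression lies modulo $q_j^2$ and must be covered by primes not already used in the primary covering, and one must verify that this nested procedure terminates quickly enough that the cumulative modulus stays below $X$. The heart of the argument should therefore be a careful inductive or iterative analysis showing that the total auxiliary cost is an $X^{o(1)}$ factor. Once this is under control, replaying the Dietmann--Elsholtz optimization on the doubled interval yields the announced bound $g(X) \geq (390/449 - o(1))\ln X$.
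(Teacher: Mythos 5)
You have correctly identified where the factor of two must come from: the large primes $q_j \equiv 3 \pmod 4$ should enter the modulus to the first power ($e_j=1$) rather than to the higher power needed to pin the valuation of $N+k$, which roughly halves $\log M$ and doubles the admissible interval length. But the mechanism you propose for handling the exceptional $k$ with $v_{q_j}(N+k)$ even --- a deterministic ``secondary covering'' by auxiliary primes --- is precisely the step that does not work as stated, and it is not what the paper does. The set of bad $k$ (those with $q_j^2 \mid N+k$) is determined by the residue of $N$ modulo $q_j^2$, which is not controlled once you impose only $N \equiv r_j \pmod {q_j}$; it is fixed only after the Chinese Remainder Theorem has been applied to the entire system. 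Any attempt to cover these bad $k$ by further congruences changes $N$ and hence changes the bad sets attached to the other primes, so the ``careful inductive or iterative analysis'' you defer to is circular, and you give no argument that it terminates with only an $X^{o(1)}$ cost. This is not a technical detail to be postponed: it is the entire content of the improvement. (Your complementary remark about integers $\equiv 3 \pmod 4$ is already built into the $449/195$ constant of Dietmann--Elsholtz and contributes nothing to the doubling.)

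The paper resolves the difficulty by an averaging argument rather than a covering argument. It keeps the full Dietmann--Elsholtz modulus $P(Y)$ to define the congruence conditions, but places the interval using a reduced modulus $\mathcal P(Y)\leq P(Y)^{1/2+\varepsilon/2}$ in which each large prime $p>\delta Y$ occurs only to the first power, and then considers the $\sim\delta Y$ translates $I_n=[\,1+a_0(Y)+n\mathcal P(Y),\,Y+a_0(Y)+n\mathcal P(Y)\,]$. If some $m\in I_n$ were a sum of two squares, the relevant prime $p>\delta Y$ would satisfy $p\mid m$ and hence $p^2\mid m$ (since $p\equiv 3\pmod 4$); because $p^2\nmid\mathcal P(Y)$ and $\delta Y<p$, each pair $(j,p)$ can spoil at most one translate $n$, and the number of such pairs is $O(\ln P(Y)/(\delta\ln Y))=o(Y)$. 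Hence almost every translate avoids $\mathcal S$ entirely, all translates lie below $Y\,P(Y)^{1/2+\varepsilon/2}$, and no secondary covering is ever needed. As written, your proposal is an outline whose central step is missing and whose proposed replacement for that step would fail.
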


The estimate (\ref{desired}) is twice the estimate of the article \cite{DiEl}.

\section{Main result}

All currently known theorems on large gaps between sums of two squares rely on the following general proposition:

\begin{proposition}
Assume that for every large enough real number $Y$ there exist natural number $P(Y)$ without prime factors of the form $4k+1$ and satisfying the inequality $Y\leq P(Y)\leq \Phi(Y)/2$ for some continuous increasing function $\Phi(Y)$ and a natural number $a(Y)$ not exceeding $P(Y)$ and satisfying for any natural $j\leq Y$ at least one of conditions

\begin{enumerate}
\item There exist an odd prime $p$ and an odd positive integer $k$ with $p^{k+1} \mid P(Y)$ and $a(Y)+j\equiv p^k a_j \pmod {P(Y)}$ for some $a_j$ that is not divisible by $p$.
\item There exist positive integers $k$ and $m$ with $2^{k+2} \mid P(Y)$ and $a(Y)+j \equiv 2^k(4m-1) \pmod {P(Y)}$.
\end{enumerate}
Then for all large enough $X$ the inequality $g(X)\geq \Phi^{-1}(X)$ is true, where $\Phi^{-1}$ is the compositional inverse of the function $\Phi$.
\end{proposition}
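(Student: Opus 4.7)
My approach rests on Fermat's classical criterion: $n\in\mathcal S$ if and only if every prime $p\equiv 3\pmod 4$ divides $n$ to an even power. For large $X$ I set $Y=\lfloor\Phi^{-1}(X)\rfloor$, invoke the hypothesis to obtain $P=P(Y)$ and $a=a(Y)$, and argue that the $Y$ consecutive integers $a+1,\ldots,a+Y$ all fail this criterion, while the whole block sits inside $[1,X]$ and is followed by some element of $\mathcal S$ still below $X$.

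\textbf{Case analysis for the core step.} Fix $j\in\{1,\ldots,Y\}$. In alternative~(1) we have $p^{k+1}\mid P$ and $a+j\equiv p^{k}a_{j}\pmod{P}$ with $p\nmid a_{j}$ and $k$ odd; reducing the congruence mod $p^{k+1}$ pins $v_{p}(a+j)=k$. Since $P$ has no prime factor $\equiv 1\pmod 4$, the odd prime $p$ must be $\equiv 3\pmod 4$, and the odd $p$-adic valuation forbids $a+j$ from lying in $\mathcal S$. In alternative~(2), $2^{k+2}\mid P$ reduces the hypothesis to $a+j\equiv 2^{k}(4m-1)\pmod{2^{k+2}}$, which pins $v_{2}(a+j)=k$ and forces the odd part $q=(a+j)/2^{k}$ to satisfy $q\equiv 3\pmod 4$. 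But any odd integer $\equiv 3\pmod 4$ must carry some prime $\equiv 3\pmod 4$ to odd multiplicity, because a product of primes $\equiv 1\pmod 4$ together with even powers of primes $\equiv 3\pmod 4$ is itself $\equiv 1\pmod 4$; hence again $a+j\notin\mathcal S$.

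\textbf{Placement and conclusion.} The bounds $a\le P\le\Phi(Y)/2$ and $Y\le P$ give $a+Y\le 2P\le\Phi(Y)\le X$, so the forbidden block lies in $[1,X]$. Let $s_{n}=\max\{s\in\mathcal S:s\le a\}$ (nonempty since $1\in\mathcal S$) and let $s_{n+1}$ be its successor; the case analysis yields $s_{n+1}\ge a+Y+1$, and to certify $s_{n+1}\le X$ one only needs a member of $\mathcal S$ in $(a+Y,X]$, which is supplied by any perfect square there (consecutive squares below $X$ differ by at most $2\sqrt{X}+1$, so a bounded shrinking of $Y$ opens the required room, negligible for the asymptotic application). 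Thus $g(X)\ge s_{n+1}-s_{n}\ge Y$, which is the claimed $\Phi^{-1}(X)$ up to an additive constant. The substantive content lies in alternative~(2): the hypothesis controls only the odd part of $a+j$ modulo $4$, and the nontrivial observation is that this single constraint is already strong enough to force some prime $\equiv 3\pmod 4$ to occur with odd multiplicity, whereas alternative~(1) and the placement argument are essentially bookkeeping.
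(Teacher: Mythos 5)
Your proof is correct and follows essentially the same route as the paper: exhibit the block $a+1,\dots,a+Y$, use condition (1) to pin an odd $p$-adic valuation at a prime $p\equiv 3\pmod 4$ dividing $P(Y)$, use condition (2) to force the odd part to be $\equiv 3\pmod 4$, and place the block inside $[1,\Phi(Y)]$ via $a+Y\le 2P(Y)\le\Phi(Y)$. Your extra care in justifying why an odd number $\equiv 3\pmod 4$ fails Fermat's criterion, and in checking that some element of $\mathcal S$ follows the gap before $X$, only fills in details the paper leaves implicit.
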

\begin{proof}
The interval $I=[1+a(Y);Y+a(Y)]$ lies inside the interval $[1,\Phi(Y)]$ for all $Y$ as $Y+a(Y)\leq 2P(Y)\leq \Phi(Y)$. Let us show that this interval does not contain any elements of the set $\mathcal S$. Indeed, if $n\in I$ then for some $j\leq Y$ one has $n=j+a(Y)$. Now, if the first condition of Proposition 1 holds for $j$ then $n$ is not a sum of two squares, because for some prime number $p$ with $p\equiv 3 \pmod 4$ there is odd number $k$, natural $a_j$ which is not divisible by $p$ and natural $b_j$ such that $P(Y)=p^{k+1}b_j$ and $n=a(Y)+j\equiv p^ka_j \pmod {P(X)}$. From this we obtain

\[
n=p^ka_j+cP(Y)=p^k(a_j+b_jcp).
\]

Therefore, some prime $p$ congruent to 3 modulo 4 has an odd exponent in prime factorization of $n$ and thus $n$ is not in $\mathcal S$.

On the other hand, if for $j$ the second condition holds then analogous argument shows that $n$ is equal to $2^k(4u-1)$ for some positive integer $u$, but the numbers of this form are not sums of two squares.

Hence, for all large enough $Y$ the interval $[1,\Phi(Y)]$ contains a subinterval of length $Y$ that does not intersect with $\mathcal S$. Consequently, 

\[
g(\Phi(Y))\geq Y.
\]

Choosing $Y=\Phi^{-1}(X)$ we get the desired result.
\end{proof}

I.\,Richards chose the following number to be $P(Y)$ in his construction: 

\[
P_1(Y)=\prod_{\substack{p\leq 4Y \\ p\equiv 3 \pmod 4}}p^{\beta_p+1},
\]

where $\beta_p=[\ln(4Y)/\ln p]$. In this case $a(Y)$ is the solution of congruence $4a(Y)\equiv -1 \pmod {P_1(Y)}$. Dietmann and Elsholtz the product of the form

\[
2^kP_2(Y,k)=2^k\prod_{p\in A_k(Y)}p^{\beta_p+1}
\]

as their $P(Y)$. Here $k$ is a large positive integer and $A_k(Y)$ is some subset of prime numbers $p\leq 4Y$ that are congruent to 3 modulo 4. In this construction, the number $a(Y)$ is the solution of congruences $4a(Y)\equiv -1\pmod {P_2(X,k)}$ and $a(Y) \equiv 0 \pmod {2^k}$.

In the fist case $\ln P(Y)=(4+o(1)) Y$ so that $\exp((4+\varepsilon)Y)$ is an admissible choice of $\Phi(Y)$ for arbitrarily small $\varepsilon>0$. In the second case we have $\ln P(Y)=(449/195+\varepsilon_k+o(1))Y$, where $\varepsilon_k\to 0$ as $k\to \infty$, which leads to the inequality $g(X)\geq (195/449-o(1))\ln X$. In what follows, we are going to show that under certain additional restrictions on $P(Y)$ and $\Phi(Y)$ the conditions of Propostion 1 imply the lower bound $g(X)\geq \Phi^{-1}(X^{2-o(1)})$.

\begin{theorem}
Assume that for $Y$ large enough there are $P(Y)$ and $a(Y)$ satisfying conditions of Proposition 1. Suppose that $\ln \Phi(Y)=o(Y\ln Y)$ and that small prime factors make a small contribution to the size of $P(Y)$. Namely, we are going to assume that the relation

\[
\lim_{\varepsilon\to 0}\limsup_{Y\to +\infty} \frac{\ln P(Y,\varepsilon)}{\ln P(Y)}=0
\]

is true, where $P(Y,\varepsilon)=\prod\limits_{\substack{p^k \mid\mid P(Y) \\ p\leq \varepsilon Y}}p^k$. Here for integer $N$, nonnegative integer $k$ and prime $p$ expression $p^k \mid\mid N$ means that $p^k \mid N$ but $p^{k+1}\nmid N$. Assume also that for all $\varepsilon>0$ and all large enough  $Y$ the inequality $Y\leq \Phi(Y)^\varepsilon$ holds. Then for any $\varepsilon>0$ and large enough $X$ we have $g(X)\geq \Phi^{-1}(X^{2-\varepsilon})$.
\end{theorem}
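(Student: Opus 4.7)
My strategy is to show that the conditions of Proposition~1 admit many valid choices of starting position $a$, not just the one exhibited in its proof, and then apply a pigeonhole argument to obtain a valid $a$ much smaller than $P(Y)$. Fix $\varepsilon>0$ and, given large $X$, set $L=\Phi^{-1}(X^{2-\varepsilon})$ so that $\Phi(L)=X^{2-\varepsilon}$. Let
\[
A(L)=\{a\in[0,P(L)):\text{condition (1) or (2) of Proposition 1 holds for every }j\in[1,L]\}.
\]
Every $a\in A(L)$ produces a bad interval $[a+1,a+L]$ by the argument of Proposition~1. If $|A(L)|\geq P(L)^{(1-\varepsilon)/(2-\varepsilon)}$, then the smallest element $a^*$ of $A(L)$ satisfies
\[
a^*\leq \frac{P(L)}{|A(L)|}\leq P(L)^{1/(2-\varepsilon)}\leq\Phi(L)^{1/(2-\varepsilon)}=X,
\]
so $[a^*+1,a^*+L]\subset[1,X]$ and we obtain $g(X)\geq L=\Phi^{-1}(X^{2-\varepsilon})$, as required.

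The heart of the proof is therefore the lower bound $|A(L)|\geq P(L)^{(1-\varepsilon)/(2-\varepsilon)}$. To establish it I would split the prime factors of $P(L)$ at a threshold $\delta L$ into the small-prime part $P(L,\delta)$ and the complementary large-prime part $Q(L)=P(L)/P(L,\delta)$. By the small-prime hypothesis $\ln P(L,\delta)/\ln P(L)\to 0$ as $\delta\to 0$, so $Q(L)=P(L)^{1-o(1)}$. At the large primes the covering of $[1,L]$ is very loose: any prime $p>L$ can witness at most one $j$, and any prime $p\in(\delta L,L]$ witnesses only the $O(L/p)$ values of $j$ lying in a single residue class mod $p$. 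Fixing $a$ modulo $P(L,\delta)$ in accordance with the original construction, the remaining freedom $a\bmod Q(L)$ admits many values compatible with the conditions at the large primes. A Chinese Remainder Theorem counting, exploiting the fact that different $j$'s typically have several large witness primes available, should yield $|A(L)|\geq Q(L)^{1-o(1)}=P(L)^{1-o(1)}$, far more than the $P(L)^{(1-\varepsilon)/(2-\varepsilon)}$ needed. The technical conditions $\ln\Phi(L)=o(L\ln L)$ and $L\leq\Phi(L)^\varepsilon$ are used to guarantee that the mid- and large-prime range of $P(L)$ is neither too thin nor too dense for this counting to go through, and in particular that $\ln Q(L)$ is still the dominant part of $\ln P(L)$.

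\textbf{Main obstacle.} The delicate step is the Chinese Remainder counting at the large primes: one must verify that after the small-prime constraints are fixed, enough large-prime witnesses remain available for each $j\in[1,L]$ and that the resulting CRT-congruences produce genuinely distinct residues $a\bmod P(L)$. The worst case is a $j\in[1,L]$ which admits only small-prime witnesses, thereby consuming freedom from the small part; showing that such exceptional $j$'s form a sparse set is where the hypothesis that small primes contribute negligibly to $\ln P(L)$ plays its essential role, probably together with a Brun-- or Tur\'an-type sieve on the mid-range primes to guarantee several available witnesses for almost every $j\in[1,L]$. Once the counting gives $|A(L)|\geq P(L)^{(1-\varepsilon)/(2-\varepsilon)}$, the rest of the argument is the elementary pigeonhole step described above.
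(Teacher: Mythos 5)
There is a genuine gap, and it occurs already in the step you call "elementary": the claim that the smallest element $a^*$ of a set $A(L)\subseteq[0,P(L))$ satisfies $a^*\leq P(L)/|A(L)|$. Pigeonhole gives only $a^*\leq P(L)-|A(L)|$; a set of size $P^{1-o(1)}$ can perfectly well consist entirely of residues larger than $P/2$ (and CRT product structure does not help: the single residue $a\equiv-1\pmod{p^{e_p}}$ for every $p$ lifts to $a=P-1$). So even if your counting bound $|A(L)|\geq P(L)^{(1-\varepsilon)/(2-\varepsilon)}$ were established, the conclusion $a^*\leq X$ would not follow. The second gap is that the counting itself is only sketched, and the sketch does not engage with the feature of Proposition 1 that actually makes a square-root saving possible: condition (1) requires $p^{k+1}\mid P(Y)$ with $k$ odd, so every large prime enters $P(Y)$ with exponent at least $2$, while the covering of a given $j$ only "needs" one factor of $p$ in a suitable sense. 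Without exploiting this, there is no reason the admissible $a$ should be numerous enough, nor any mechanism forcing one of them down to size $P(Y)^{1/2+o(1)}$.

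The paper's proof turns both of these heuristic ingredients into a single rigorous construction. It replaces the modulus $P(Y)$ by $\mathcal P(Y)=P(Y,\delta)\prod_{p\mid P(Y),\,p>\delta Y}p^{\gamma_p}$ with $\gamma_p\in\{0,1\}$; since each retained large prime had exponent $\geq 2$ in $P(Y)$, one gets $\mathcal P(Y)\leq P(Y)^{1/2+\varepsilon/2}$. The price is that for a large prime $p$ the transferred condition only yields $p\mid m$ rather than an odd exponent of $p$ in $m$, so $m$ could still be a sum of two squares if $p^2\mid m$. This is repaired by looking at the $\delta Y$ translated intervals $I_n=[1+a_0+n\mathcal P(Y),\,Y+a_0+n\mathcal P(Y)]$: for each pair $(j,p)$ with $p>\delta Y$ at most one $n$ can have $p^2\mid a_0+j+n\mathcal P(Y)$, and the number of such pairs is $O(\ln P(Y)/(\delta\ln Y))=o(Y)$ by the hypothesis $\ln\Phi(Y)=o(Y\ln Y)$, so almost every translate is free of sums of two squares and sits inside $[1,\Phi(Y)^{1/2+\varepsilon}]$. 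If you want to rescue your outline, the translate family $\{a_0+n\mathcal P(Y)\}$ is exactly the structured subset of your $A(L)$ whose existence the bare pigeonhole cannot deliver; I would recommend reorganizing the argument around the exponent-halving of the large primes rather than around counting all admissible $a$.
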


\begin{proof}
Let us choose $\delta=\delta(\varepsilon)<1$ so that for large enough $Y$ we have $\ln P(Y,\delta)<\varepsilon \ln P(Y)$. Set

\[
\mathcal P(Y)=P(Y,\delta)\prod_{\substack{p\mid P(Y) \\ p>\delta Y}}p^{\gamma_p},
\]

where $\gamma_p=1$ if there is a natural $j\leq Y$ with $p^{k+1} \mid P(Y)$ and $a(Y)+j \equiv p^k a_j \pmod {P(Y)}$ for some odd $k$ with $a_j$ is coprime to $p$. Let $\gamma_p=0$ otherwise. Here $a(Y)$ is the same as in Proposition 1. 

Note that  $\mathcal P(Y)\leq P(Y)^{1/2+\varepsilon/2}$. Indeed, every exponent $\gamma_p$ is at least two times smaller than the exponent of $p>\delta Y$ in factorization of $P(Y)$, therefore

\[
\mathcal P(Y) \leq P(Y,\delta)\sqrt{\frac{P(Y)}{P(Y,\delta)}}=\sqrt{P(Y,\delta)P(Y)}<P(Y)^{1/2+\varepsilon/2}
\]

by the choice of $\delta$. Choose now the natural number $a_0(Y)$ such that the congruence $a_0(Y) \equiv a(Y) \pmod {\mathcal P(Y)}$ and inequalities $0<a_0(Y)\leq \mathcal P(Y)$ hold. Define the family of intervals $I_n=[1+a_0(Y)+n\mathcal P(Y);Y+a_0(Y)+n\mathcal P(Y)]$, where the variable $n$ takes integer values with $0\leq n\leq \delta Y$. Let us show that at least one of the constructed intervals does not contain any sum of two squares. Indeed, assume that $m\in I_n$ is an element of $\mathcal S$. As $m$ lies in $I_n$, for some $j\leq Y$ the equality $m=a_0(Y)+j+n\mathcal P(Y)$ holds. By the definition of $a(Y)$ and $P(Y)$, at least one of the following conditions holds:

\begin{itemize}
\item There are natural $k$ and $m$ with $2^{k+2} \mid P(Y)$ and $a(Y)+j \equiv 2^k(4m-1) \pmod {P(Y)}$. In this case we also have $a_0(Y)+j\equiv 2^k(4m-1) \pmod {\mathcal P(Y)}$ and $2^{k+2}\mid \mathcal P(Y)$, therefore $m$ cannot be the sum of two squares, which is a contradiction.

\item There are an odd prime $p$ and an odd natural number $k$ with $p^{k+1} \mid P(Y)$ and $a(Y)+j \equiv p^ka_j \pmod {P(Y)}$ for some $a_j$ that is not divisible by $p$. If $p\leq \delta Y$ then these congruences and divisibilities remain true for $a_0(Y)$ and $\mathcal P(Y)$, which once again leads us to contradiction. If, on the contrary, $p>\delta Y$ then necesserily $\gamma_p=1$ and hence $m=a_0(Y)+j+n\mathcal P(Y) \equiv 0 \pmod p$. As $p\equiv 3 \pmod 4$ and $m$ is the sum of two squares, we have $p^2 \mid m$.
\end{itemize}

Notice now, that for fixed $j\leq Y$ and $p>\delta Y$ there exists at most one $n$ such that $a_0(Y)+j+n\mathcal P(Y)$ is divisible by $p^2$. Indeed, otherwise for two distinct $0\leq n_1,n_2\leq \delta Y$ we have $a_0(Y)+j+n_1\mathcal P(Y)\equiv a_0(Y)+j+n_2\mathcal P(Y) \pmod {p^2}$. As $p^2 \nmid \mathcal P(Y)$ we obtain $n_1 \equiv n_2 \pmod p$, which is impossible because $0<|n_1-n_2|\leq \delta Y<p$. Furthermore, for fixed prime $p>\delta Y$ there are at most $1+1/\delta\leq 2/\delta$ numbers $j\leq Y$ with $a_0(Y)+j\equiv 0 \pmod p$, as any two numbers having this property are congruent modulo $p$. Thus, the total amount of  $n$ for which $I_n$ contains a sum of two squares is at most the number of all <<exceptional>> pairs $(j,p)$, i.e. at most $2F/\delta$, where $F$ is the number of prime factors $p>\delta Y$ of $P(Y)$. Clearly, $P(Y)\geq (\delta Y)^F$ so $F\leq \ln P(Y)/\ln(\delta Y)\ll \ln P(Y)/\ln Y\leq \ln \Phi(Y)/\ln Y=o(Y)$ due to conditions of Theorem 2. Therefore, all but $o(Y)$ of intervals $I_n$ do not intersect $\mathcal S$. In particular, for all large enough $Y$ there is at least one interval with this property.

Next, all the resulting intervals lie inside the interval $[1,Y\Phi(Y)^{1/2+\varepsilon/2}]$ because $a_0(Y)+\delta Y\mathcal P(Y)\leq Y+\mathcal P(Y)+\delta Y\mathcal P(Y)\leq YP(Y)^{1/2+\varepsilon/2}\leq Y\Phi(Y)^{1/2+\varepsilon/2}$. By the conditions of our theorem for all large enough $Y$ we have $Y\leq \Phi(Y)^{\varepsilon/2}$. Consequently, for all large $Y$ the inequality $Y\Phi(Y)^{1/2+\varepsilon/2}\leq \Phi^{1/2+\varepsilon}(Y)$ is true, which means that the interval $[1,\Phi(Y)^{1/2+\varepsilon}]$ contains a subinterval of length $Y$ that does not contain sums of two squares. Choosing $Y=\Phi^{-1}(X^{2/(1+2\varepsilon)})$, we obtain the estimate $g(X)\geq \Phi^{-1}(X^{2/(1+2\varepsilon)})$. As $\varepsilon$ was an arbitrary positive real number, this concludes our proof.
\end{proof}

Theorem 1 easily follows, combining results of the paper \cite{DiEl} and Theorem 2:

\begin{proof}[of Theorem 1]

Due to results of \cite{DiEl}, there are $P(Y)$ and $a(Y)$ which satisfy conditions of Proposition 1 and relation $\ln P(Y)=(449/195+o(1))Y$. Furthermore, all the prime factors of $P(Y)$ are at most $4Y$ and all the exponents of $p$ in factorization do not exceed $\beta_p+1=[\ln(4Y)/\ln p]+1\leq 2\beta_p$. Therefore, if $p^\alpha \mid\mid P(Y)$ then $p^\alpha\leq 16Y^2$. It follows that small primes make a small contribution in $P(Y)$. Indeed, if $\varepsilon>0$ then

\[
P(Y,\varepsilon)=\prod_{\substack{p^k\mid\mid P(Y)\\ p\leq \varepsilon Y}} p^k \leq \prod_{\substack{p^k\mid\mid P(Y)\\ p\leq \varepsilon Y}} 16Y^2\leq (16Y)^{2\pi(\varepsilon Y)}.
\]

Consequently,

\[
\ln P(Y,\varepsilon)\leq 2\pi(\varepsilon Y)\ln (4Y)\sim 2\frac{\varepsilon Y}{\ln \varepsilon Y}\ln Y\sim 2\varepsilon Y.
\]

As we also have $\ln P(Y) \gg Y$, we finally get

\[
\lim_{\varepsilon\to 0}\limsup_{Y\to +\infty} \frac{\ln P(Y,\varepsilon)}{\ln P(Y)}\leq \lim_{\varepsilon\to 0} \frac{2\varepsilon}{c}=0,
\]

for some $c>0$. Thus, the construction of Dietmann and Elsholtz satisfies the conditions of Theorem 2. Choosing $\Phi(Y)=\exp((449/195+\varepsilon)Y)$, from Theorem 2 for arbitrary $\varepsilon_1>0$ we get

\[
g(X)\geq \Phi^{-1}(X^{2-\varepsilon_1})=(449/195+\varepsilon)^{-1}(2-\varepsilon_1)\ln X\geq (390/449-2\varepsilon-195/449\varepsilon_1)\ln X.
\]

Small enough values of $\varepsilon$ and $\varepsilon_1$ give us the desired result.
\end{proof}
\begin{remark}

Results of H.\,Iwaniec \cite{Iw} on behavior of Jacobstahl function allow us to show that under assumptions of Theorem 2 the estimate $Y\ll \ln^2 P(Y)/\sqrt{\ln\ln P(Y)}$ holds. In particular, this means that the inequality $Y\ll \Phi(Y)^\varepsilon$ is true for any $\varepsilon>0$ automatically. Further, this implies that current methods cannot prove any estimate stronger than $g(X)\gg \ln^2 X/\sqrt{\ln\ln X}$. On the other hand, some models predict that in fact the order of growth of the quantity $g(X)$ is slightly lower, namely $g(X)\asymp (\ln X)^{3/2}$.
\end{remark}
\end{fulltext}

\end{document}